\theoremstyle{plain}
    \newtheorem{thm}{Theorem}[section]
    \newtheorem{prp}[thm]{Proposition}
\theoremstyle{remark}
    \newtheorem*{ack}{Acknowledgements}
\newcommand{\cp}{\mathrel{\sim_{\text{p}}}}
\newcommand{\inv}{^{-1}}                % inverse
\title{The transitivity of primary conjugacy in a class of semigroups}
\author{Maria Borralho}
\address{Universidade Aberta, R. Escola Polit\'{e}cnica, 147\\1269-001 Lisboa, Portugal}
\address{CEMAT, Universidade de Lisboa, Av. Rovisco Pais, 1\\1049-001 Lisboa, Portugal}
\begin{document}

\begin{abstract}
Elements $a,b$ of a semigroup $S$ are said to be \emph{primarily conjugate}
or just \emph{p-conjugate}, if there exist $x,y\in S^1$ such that $a=xy$ and $b=yx$.
The p-conjugacy relation generalizes conjugacy in groups, but for general semigroups,
it is not transitive. Finding the classes of semigroups in which this
notion is transitive is an open problem. The aim of this note is to show
that for semigroups satisfying $xy\in\left\{ yx,\left(xy\right)^n\right\}$ for
some $n > 1$, primary conjugacy is transitive.
\end{abstract}

\maketitle

\section{Introduction}

By a notion of conjugacy for a class of semigroups, we mean an equivalence
relation defined in the language of that class of semigroups such that when
restricted to groups, it coincides with the usual notion of conjugacy.

Before introducing the notion of conjugacy that will occupy us, we recall some
standard definitions and notation (we generally follow \cite{Ho95}). For a
semigroup $S$, we denote by $S^1$ the semigroup $S$ if $S$ is a monoid;
otherwise $S^1$ denotes the monoid obtained from $S$ by adjoining an identity
element $1$.

Any reasonable notion of semigroup conjugacy should coincide in groups
with the usual notion. Elements $a,b$ of a group $G$ are conjugate if there exists
$g\in G$ such that $a=g\inv bg$. Conjugacy in groups has several equivalent formulations
that avoid inverses, and hence generalize syntactically to any semigroup. For many
of these notions including the one we focus on here, we refer the reader to 
\cite{ArKiKoMa15,Ko18,KuMa09}.

For example, if $G$ is a group, then
$a,b\in G$ are conjugate if and only if $a=uv$ and $b=vu$ for some
$u,v\in G$. Indeed, if $a = g\inv bg$, then setting $u=g\inv b$ and $v=g$
gives $uv=a$ and $vu=b$; conversely, if $a=uv$ and $b=vu$ for some $u,v\in G$,
then setting $g = v$ gives $g\inv bg = v\inv vuv = uv = a$.

This last formulation was used to define the following relation on a free semigroup $S$
(see \cite{La79}):
\[
a\cp b\qquad\iff\qquad \exists_{u,v\in S^1}\quad a=uv\text{ and }b=vu.
\]
If $S$ is a free semigroup, then $\cp$ is an equivalence relation on $S$
\cite[Cor.5.2]{La79}, and so it can be considered as a notion of conjugacy in
$S$. In a general semigroup $S$, the relation $\cp$ is reflexive and
symmetric, but not transitive. If $a\cp b$ in a semigroup, we say that $a$
and $b$ are \emph{primarily conjugate} or just p-conjugate for short
(hence the subscript in $\cp$); $a$ and $b$ were said to be 
``primarily related'' in \cite{KuMa09}. Lallement \cite{La79} credited the idea
of the relation $\cp$ to Lyndon and Sch\"{u}tzenberger \cite{LySc62}. 

In spite of its name, $\cp$ is a valid notion of conjugacy only in the class of
semigroups in which it is transitive. Otherwise, the transitive closure $\cp^{\ast}$
of $\cp$ has been defined as a conjugacy relation in a general semigroup \cite{Hi06, KuMa07,
KuMa09}. Finding classes of semigroups in which $\cp$ itself is transitive, that is,
$\cp = \cp^{\ast}$, is an open problem. The aim of this note is to prove the
following theorem.

\begin{thm}\label{Thm:main}
Let $n > 1$ be an integer and let $S$ be a semigroup satisfying the following:
for all $x,y\in S$,
\[
xy\in \left\{ yx,\left(xy\right)^n \right\}\,.
\]
Then primary conjugacy $\cp$ is transitive in $S$.
\end{thm}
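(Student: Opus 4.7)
The plan is as follows. Suppose $a \cp b$ and $b \cp c$, witnessed by $a = uv$, $b = vu = st$, $c = ts$ with $u,v,s,t \in S^1$. I would first dispose of the trivial cases: if any of $u,v,s,t$ equals $1$, then $a = b$ or $b = c$ and $a \cp c$ holds at once. With $u,v,s,t \in S$, applying the hypothesis to the pair $(u,v)$ gives either $uv = vu$ (so $a = b$ and we are done) or $a = a^n$. The analogous dichotomies applied to $(s,t)$ and to $(t,s)$ leave a single non-trivial case, in which $a^n = a$, $b^n = b$, and $c^n = c$; this is the situation I would concentrate on.

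In that remaining case I would extract, by pure associativity, the ``braiding'' relations
\[
ub = (uv)u = au, \qquad bv = va, \qquad bs = sc, \qquad tb = ct,
\]
whose obvious iterates (in particular $c^{k}t = tb^{k}$ for all $k \geq 0$) will be the main tool.

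The heart of the proof is to guess the right witnesses for $a \cp c$. I would try
\[
p = u s c^{n-2} \in S^1, \qquad q = tv \in S^1,
\]
with the convention $c^{0} = 1$ to cover $n = 2$. Combining the braiding relations with the standard identity $(uv)^n = u(vu)^{n-1}v$, the computation
\[
pq = us \cdot c^{n-2}t \cdot v = u(st)b^{n-2}v = ub^{n-1}v = (uv)^n = a^n = a
\]
is forced, and symmetrically $qp = t(vu)sc^{n-2} = tbsc^{n-2} = t(bs)c^{n-2} = (ts)c^{n-1} = c^n = c$. Hence $a \cp c$.

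The main obstacle is guessing the witnesses $p$ and $q$: once one recognises that $a,b,c$ each satisfy $x^n = x$ and that the given factorisations braid past one another, the compensating tail $c^{n-2}$ is exactly what is needed to collapse the leftover $c\cdot c^{n-1}$ to $c$ via $c^n = c$. A minor point to handle uniformly is the edge case $n = 2$, in which $c^{n-2}$ must be read as $1 \in S^1$; this causes no harm since $\cp$ is defined in terms of $S^1$ in the first place.
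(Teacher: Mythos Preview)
Your argument is correct and follows essentially the same route as the paper's proof: reduce to the case $a\neq b$, $b\neq c$, conclude $a=a^n$ and $c=c^n$, and then exhibit explicit witnesses by expanding these powers and regrouping via associativity. The only difference is cosmetic: the paper takes $x=us$ and $y=t\,b^{\,n-2}\,v$, whereas you take $p=us\,c^{\,n-2}$ and $q=tv$; since your own braiding identity $c^{\,n-2}t=t\,b^{\,n-2}$ shows $pq = us\cdot t\,b^{\,n-2}\cdot v = xy$, the two factorizations differ only in where the product is split.
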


There are various motivations for studying this particular class of semigroups.
First, it naturally generalizes two classes of semigroups in which $\cp$ is transitive.

\begin{prp}
Let $S$ be a semigroup. 
\begin{enumerate}
  \item If $S$ is commutative, then $\cp$ is transitive.
  \item If $S$ satisfies $xy = (xy)^2$ for all $x,y\in S$, then $\cp$ is transitive.
\end{enumerate}
\end{prp}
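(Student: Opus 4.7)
For (1), I would simply observe that $\cp$ collapses to equality on a commutative semigroup. Commutativity passes from $S$ to $S^1$ (the adjoined identity commutes with everything trivially), so any witnesses $u,v\in S^1$ for $a\cp b$ satisfy $a=uv=vu=b$. Thus $\cp$ coincides with equality on $S$, and transitivity is immediate.

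For (2), my plan is to unpack a chain $a\cp b\cp c$ and produce explicit witnesses for $a\cp c$. Fix $u_1,v_1,u_2,v_2\in S^1$ with $a=u_1v_1$, $b=v_1u_1=u_2v_2$, and $c=v_2u_2$. The key structural remark is that the identity $xy=(xy)^2$ forces every element of $S$ that lies in the image of multiplication to be idempotent. I would first dispose of the boundary cases: if $u_1=1$ or $v_1=1$ then $a=b$, and symmetrically if $u_2=1$ or $v_2=1$ then $b=c$, so $a\cp c$ is trivial in each subcase. This reduces everything to the situation where all four witnesses lie in $S$, in which case $a$, $b$, $c$ are themselves products in $S$ and hence idempotent.

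In this main case my candidate bridge is $p:=u_1u_2$ and $q:=v_2v_1$, both of which lie in $S\subseteq S^1$. Feeding the factorization $u_2v_2=b$ into $pq$ and then the other factorization $b=v_1u_1$ yields
\[
pq \;=\; u_1(u_2v_2)v_1 \;=\; u_1bv_1 \;=\; u_1v_1u_1v_1 \;=\; (u_1v_1)^2 \;=\; a
\]
by idempotence of $a$, and symmetrically $qp=v_2(v_1u_1)u_2=(v_2u_2)^2=c$, so $a\cp c$ via $(p,q)$. The only real obstacle I anticipate is hitting on this choice of bridge; once one thinks to combine the two different factorizations of the middle term $b$ with the idempotence identity, the calculation writes itself, and the case analysis over the $S^1$-witnesses is pure bookkeeping.
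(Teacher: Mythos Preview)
Your argument for (1) matches the paper's exactly: in a commutative semigroup $\cp$ is the identity relation, hence transitive.

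For (2) your proof is correct but takes a different route from the paper's proof of the Proposition. The paper argues that if $a\cp b$ then $a=(uv)^2=uv=a^2$ (and likewise for $b$), so any two $\cp$-related elements are idempotent, hence completely regular; it then invokes Kudryavtseva's result \cite{Ku06} that $\cp$ is transitive on the set of completely regular elements. Your approach avoids this external citation and instead constructs explicit witnesses $p=u_1u_2$, $q=v_2v_1$, using the two factorizations of $b$ and idempotence of $a$ and $c$ to verify $pq=a$, $qp=c$. This is exactly the $n=2$ computation that the paper carries out later in the proof of Theorem~\ref{Thm:main} (with $x=a_1b_1$, $y=b_2a_2$ in the paper's notation), so the paper does contain your argument---just not as its proof of this Proposition. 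Your version is self-contained and elementary; the paper's version for the Proposition is shorter but leans on a nontrivial cited result, and arguably serves a different expository purpose (motivating the class rather than previewing the technique).
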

\begin{proof}
(1) In a commutative semigroup, $\cp$ is the identity relation and hence it
is trivially transitive.

(2) If $a\cp b$, then $a = uv$ and $b = vu$ for some $u,v\in S^1$. Thus 
$a^2 = (uv)^2 = uv = a$ and $b^2 = (vu)^2 = vu = b$ so that $a,b$ are idempotents.
In particular, $a,b$ are completely regular elements of $S$. The restriction of 
$\cp$ to the set of completely regular elements is a transitive relation 
\cite{Ku06}.
\end{proof}

The other motivation for studying this class of semigroups is that it has
been of recent interest in other contexts. In particular, J.~P.~Ara\'{u}jo and
Kinyon \cite{JPArKi} showed that a semigroup satisfying $x^3 = x$ and 
$xy\in \{yx,(xy)^2\}$ for all $x,y$ is a semilattice of rectangular bands
and groups of exponent $2$. 
\bigskip

The proof of Theorem \ref{Thm:main} was found by first proving the
special cases $n = 2,3,4$ using the automated theorem prover \texttt{Prover9}
developed by McCune \cite{McCune}. After studying these proofs, the pattern
became apparent, leading to the proof of the general case. Note that 
\texttt{Prover9} and other automated theorem provers usually cannot handle
statements like our theorem directly because there is not a way
to specify that $n$ is a fixed positive integer. Thus the approach of
examining a few special cases and then extracting a human proof of the
general case is the most efficient way to use an automated theorem prover
in these circumstances.

\begin{proof}[Proof of Theorem~\ref{Thm:main}]
Suppose $a,b,c\in S$ satisfy $a\cp b$ and $b\cp c$. Since $a\cp b$,
there exist $a_1,a_2\in S^1$ such that $a = a_1 a_2$ and $b = a_2 a_1$.
Similarly, since $b\cp c$, there exist $b_1,b_2\in S^1$ such that
$b = b_1 b_2$ and $c = b_2 b_1$. We want to prove there exist $x,y\in S^1$ such
that $a = xy$ and $c = yx$. If $a = b$ or if $b = c$, then there is nothing to
prove. Thus we may assume without loss of generality that $a_1 a_2\neq a_2 a_1$ and
$b_2 b_1\neq b_1 b_2$.

Assume first that $n = 2$. Then 
\[
a = a_1 a_2 = (a_1 a_2)(a_1 a_2) = a_1 (a_2 a_1) a_2 = a_1 b a_2 = (a_1 b_1)(b_2 a_2)\,,
\]
and
\[
c = b_2 b_1 = (b_2 b_1)(b_2 b_1) = b_2 (b_1 b_2) b_1 = b_2 b b_1 = (b_2 a_2)(a_1 b_1)\,.
\]
Thus setting $x = a_1 b_1$ and $y = b_2 a_2$, we have $a\cp c$ in this case.

Now assume $n > 2$. We have
\begin{align*}
a  &= a_1 a_2 = (a_1 a_2)^n =
\underset{n}{\underbrace{(a_1 a_2)\cdots (a_1 a_2)}} \\
&= a_1 \underset{n-1}{\underbrace{(a_2 a_1)\cdots (a_2 a_1)}} a_2 \\
&= a_1 b^{n-1} a_2 \\
&= a_1 bb^{n-2} a_2 \\
&= a_1 (b_1 b_2) b^{n-2} a_2 \\
&= (a_1 b_1)(b_2 b^{n-2} a_2)
\end{align*}
and
\begin{align*}
c &= b_2 b_1 = (b_2 b_1)^n =
\underset{n}{\underbrace{(b_2 b_1)\cdots (b_2 b_1)}} \\
&= b_2 \underset{n-1}{\underbrace{(b_1 b_2)\cdots (b_1 b_2)}} b_1 \\
&= b_2 b^{n-1} b_1 \\
&= b_2 b^{n-2}b b_1 \\
&= b_2 b^{n-2} (a_2 a_1) b_1 \\
&= (b_2 b^{n-2} a_2)(a_1 b_1)\,.
\end{align*}
Thus setting $x = a_1 b_1$ and $y = b_2 b^{n-2} a_2$, we have that $a\cp c$.
\end{proof}

\begin{ack}
We are pleased to acknowledge the use of the automated theorem prover \texttt{Prover9}
developed by McCune \cite{McCune}. 
We also thank Prof. Jo\~{a}o Ara\'{u}jo for suggesting this problem to us. This paper
forms a part of the author's dissertation in the PhD Program in Computational Algebra
at Universidade Aberta in Portugal.
\end{ack}

\end{document}